\newtheorem{theorem}{Theorem}[subsection]
\theoremstyle{definition}
\newtheorem{df}[theorem]{Definition}
\newtheorem{rem}[theorem]{Remark}
\newtheorem{ex}[theorem]{Example}
\newtheorem{prob}[theorem]{Problem}
\numberwithin{theorem}{section}
\newcommand{\mb}{\mathbb}
\newcommand{\mc}{\mathcal}
\newcommand{\s}{\subset}
\newcommand{\bs}{\boldsymbol}
\begin{document}
\title{Families of exposing maps in strictly pseudoconvex domains}
\author{Arkadiusz Lewandowski}
\address{Institute of Mathematics\\ Faculty of Mathematics and Computer Science\\ Jagiellonian University\\ {\L}ojasiewicza 6,
30-348 Kraków, Poland}
\email{Arkadiusz.Lewandowski@im.uj.edu.pl}
\begin{abstract}
We prove that given a family $(G_t)$ of strictly pseudoconvex domains  varying in $\mc{C}^2$ topology on domains, there exists a continuously varying family of exposing maps $h_{t,\zeta}$ for all $G_t$ at every $\zeta\in\partial G_t.$  
\end{abstract}

\subjclass[2010]{Primary 32H02; Secondary 32T15}

\keywords{strictly pseudoconvex domains, exposing mappings}
\thanks{The author was supported by the grant UMO-2017/26/D/ST1/00126 financed by the National Science Centre, Poland}

\maketitle
\section{Introduction}

Let $G\s\s\mb{C}^n$ be a domain and let $\zeta\in\partial G.$ We say that $\zeta$ is a \emph{globally strongly convex} boundary point of $G$ if $\partial G$ is of class $\mc{C}^2$ and strongly convex at $\zeta$, and $\overline{G}\cap T_{\zeta}(\partial G)=\{\zeta\},$ where $T_{\zeta}(\partial G)$ denotes the tangent hyperplane of $\partial G$ at $\zeta.$ 
It is known (cf. [\ref{JGA14}]) that
\begin{theorem}\label{Exposing}
If $G$ is strictly pseudoconvex and has boundary of class $\mc{C}^2$, then for every $\zeta\in\partial G$ there exist a neighbourhood $\hat{G}$ of $\overline{G}$ and a holomorphic embedding $h:\hat{G}\rightarrow\mb{C}^n$ such that ${ h}(\zeta)$ is a globally strongly convex boundary point of ${ h}(G).$
\end{theorem}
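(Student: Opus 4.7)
My plan is to split the construction into a local normalization, realized by a polynomial change of coordinates, followed by a global holomorphic perturbation of the last coordinate.

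For the local step, by an affine change I assume $\zeta=0$ with outward unit normal $e_n$. Strict pseudoconvexity lets me write a local defining function
\[ \rho(z)=\mathrm{Re}(z_n)+\mathrm{Re}(Q(z))+H(z,\bar z)+O(|z|^3), \]
where $Q$ is a holomorphic quadratic and $H$ a Hermitian form whose Levi part (restriction to $\{z_n=0\}$) is positive definite. The polynomial change $z_n\mapsto z_n+Q(z)$ eliminates the holomorphic quadratic $\mathrm{Re}(Q)$, and a further substitution $z_n\mapsto z_n+cz_n^2$ with $c>0$ real and small adds a positive $(\mathrm{Im}\,z_n)^2$ contribution to the real Hessian on the tangent hyperplane. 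One arrives at a defining function
\[ \rho(z)=\mathrm{Re}(z_n)+P(z,\bar z)+O(|z|^3), \]
with $P$ a real quadratic form positive definite on $T:=\{\mathrm{Re}(z_n)=0\}$, so $\partial G$ is strongly convex at $0$ with tangent hyperplane $T$, and $\overline{G}\cap T=\{0\}$ locally. Since the two substitutions are polynomials on $\mathbb{C}^n$, their composition is well defined everywhere and restricts to an embedding of any sufficiently small neighborhood of $\overline{G}$ (one chooses such a neighborhood away from the critical sets of these polynomial maps).

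For the global step, I use that on a strictly pseudoconvex $\mathcal{C}^2$ domain Henkin-type constructions yield a holomorphic $g\colon\hat G\to\mathbb{C}$ on a neighborhood $\hat G$ of $\overline{G}$ with $g(0)=0$, $\mathrm{Re}(g)<0$ on $\overline{G}\setminus\{0\}$, and vanishing to order at least three at $0$. The exposing map is then defined by
\[ h(z):=\bigl(z_1,\dots,z_{n-1},\,z_n+\epsilon g(z)\bigr) \]
for a suitably chosen $\epsilon>0$. Because $g$ vanishes to order $\geq 3$ at $0$, we have $dh(0)=I$ and the real Hessian at $0$ of the pulled-back defining function of $h(G)$ agrees with that of $\rho$; hence $\partial h(G)$ is again strongly convex at $h(0)=0$ with tangent hyperplane $T$.

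The main obstacle is choosing $\epsilon$ so that simultaneously $\mathrm{Re}(z_n+\epsilon g(z))<0$ on $\overline{G}\setminus\{0\}$ and $h$ is a global embedding of $\hat G$. Near $0$ the estimate from strong convexity, $\mathrm{Re}(z_n)\leq -c|z|^2$, dominates $|\epsilon g(z)|=O(\epsilon|z|^3)$ for any $\epsilon$; away from a fixed neighborhood of $0$ one has $\mathrm{Re}(g)\leq -\delta<0$ uniformly, and $\epsilon\delta$ must outweigh $\max_{\overline{G}}\mathrm{Re}(z_n)$, so $\epsilon$ cannot be too small. Since injectivity of $h$ pulls $\epsilon$ in the opposite direction, I would reconcile the tension by first scaling $g$ (or replacing it by a high power suitably combined with further peak factors) so that $\sup_{\overline{G}}|\partial g/\partial z_n|$ becomes small relative to $\delta$; then $\epsilon\sup|\partial g/\partial z_n|<1$ makes each fibre map $z_n\mapsto z_n+\epsilon g(z',z_n)$ injective and hence $h$ injective on $\hat G$. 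The subtlest ingredient is the existence of $g$ with the prescribed vanishing order and sign of real part: this draws on the local support-function estimate $\mathrm{Re}(\Phi(z,0))\gtrsim|z|^2$ for strictly pseudoconvex domains (where $\Phi$ is the Levi polynomial at $0$) and on its globalization through Henkin's peak function machinery.
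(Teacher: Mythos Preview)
The paper does not prove this theorem; it is quoted from [\ref{JGA14}]. The method of [\ref{JGA14}], visible in the paper's parametric proof of Theorem~\ref{main}, is different from yours: one first performs the local Narasimhan normalization, then uses Anders\'en--Lempert type results (here via [\ref{Ugolini}]) to replace the local change of coordinates by a global automorphism of $\mb{C}^n$ agreeing with it to high order at $\zeta$; only then is the point pushed to an exposed position, and that last step is done not by a scalar perturbation of one coordinate but by gluing a local ``pushing'' injection to the identity by means of the Forstneri\v{c} splitting lemma.

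Your sketch has two genuine gaps. First, the polynomial substitutions $z_n\mapsto z_n+Q(z)$ and $z_n\mapsto z_n+cz_n^2$ are, as soon as $Q$ contains a $z_n^2$ term or $c\neq 0$, generically two--to--one on $\mb{C}^n$; staying away from the critical set does not give injectivity, and there is no reason $\overline{G}$ should avoid the branching. This is precisely why the standard argument invokes Anders\'en--Lempert theory to produce a \emph{global} automorphism matching the local model to high order at $\zeta$.

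Second, the reconciliation of the $\varepsilon$--tension does not work as written. Replacing $g$ by $Mg$ multiplies both $\delta$ and $\sup_{\overline{G}}|\partial g/\partial z_n|$ by $M$, so their ratio is unchanged. Taking powers $g^k$ does not preserve the sign condition: if $\mathrm{Re}\,g<0$ then $\mathrm{Re}(g^2)$ can be positive (e.g.\ at a point where $g=-\delta+i$ one gets $\mathrm{Re}(g^2)=\delta^2-1$). One can sometimes salvage the idea by working with $p^N-1$ for a peak function $p$ and large $N$, but this needs a careful two--regime analysis and is not what you wrote. The route via the splitting lemma avoids this difficulty entirely, because it separates the local exposing correction (arbitrarily close to the identity away from $\zeta$) from the question of global injectivity.
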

\noindent Such an $h$ is called an \emph{exposing mapping of $G$ at $\zeta$.} The exposing maps are useful in the investigation of the boundary behaviour of the intrinsic metrics (see [\ref{FW}] or [\ref{Wo}]), in the studies on squeezing function (see, for example [\ref{DGZ}]), and in the proof of the boundary version of the open mapping theorem for holomorphic mappings between strictly pseudoconvex domains (see [\ref{BracFor}]). See also a survey article [\ref{Zhang}] and the references therein. \\
\indent A point $\zeta$ as above is called a \emph{peak point} with respect to $\mc{O}(\overline{G})$, the family of functions holomorphic in a neighborhood of $\overline{G},$ if there exists a function $f\in\mc{O}(\overline{G})$ such that $f(\zeta)=1$ and $f(\overline{G}\setminus\{\zeta\})\s\mb{D}:=\{z\in\mb{C}:|z|<1\}.$ Such an $f$ is called a \emph{peak function for $G$ at $\zeta$}.\\
\indent The following question has been formulated in [\ref{DGZ}]:
\begin{prob} Let $\rho:\mb{D}\times\mb{C}^n\rightarrow\mb{R}$ be a plurisubharmonic function of class $\mc{C}^{k}, k\in\mb{N},k\geq 2.$ Assume that for any $t\in\mb{D}$ the truncated function $\rho|_{\{t\}\times\mb{C}^n}$ is strictly plurisubharmonic and globally {defines a} bounded strictly pseudoconvex domain $G_t:=\{w\in\mb{C}^n:\rho(t,w)<0\}.$ This latter can be understood as a family of strictly pseudoconvex domains with boundaries of class $\mc{C}^k$ over $\mb{D}.$ 
Do there exist $\mc{C}^{k-2}$-continuously varying families: 
\begin{enumerate}[(A)]
\item $(f_{t,\zeta})_{t\in\mb{D},\zeta\in\partial G_t}$ of peak functions for $G_t$ at $\zeta\in\partial G_t$ 
\item $(h_{t,\zeta})_{t\in\mb{D},\zeta\in\partial G_t}$ of exposing maps for $G_t$ at $\zeta\in\partial G_t$?
\end{enumerate}
\label{Problem}
\end{prob}
In the papers [\ref{AL1}] and [\ref{AL2}] we have affirmatively answered the question (A). In [\ref{AL1}] we treated the particular case, where the parameter space $\mb{D}$ was replaced with some compact metric space, and the constructed family of peak functions was continuous with respect to the parameter (actually, it was continuous with respect to all variables). Later, in [\ref{AL2}], we considered the problem (A) in its full generality. The hereby paper is, in the author's intention, parallel 
to [\ref{AL1}] for the problem (B): we show that, under some additional assumption, given a family of domains $G_t$ as in Problem \ref{Problem}, for any compact $K\s\mb{D}$, there exists a continuous family $(h_{t,\zeta})_{t\in K,\zeta\in\partial G_t}$ of exposing maps for $G_t$ at $\zeta\in\partial G_t,t\in K.$ Namely, we prove    
\begin{theorem}\label{main}
Let $(G_t)_{t\in\mb{D}}$ be a family of strictly pseudoconvex domains as in Problem \ref{Problem}  with $k=2$. Let $\sigma\in(0,1)$. Take an $R>0$ such that $\bigcup_{t\in \sigma\overline{{\mb{D}}}}\overline{G}_t\s\s\mb{B}(0,R)$. Assume that there exist a $\mc{C}^2$-continuous  family $(\gamma_{t,\zeta})_{t\in\sigma\overline{\mb{D}},\zeta\in\partial G_t}$ of smooth embedded arcs $[0,1]\rightarrow \mb{C}^n$ such that $\gamma_{t,\zeta}(0)=\zeta,\gamma_{t,\zeta}(1)\in\mb{S}^{{2n-1}}(R)$ and $\gamma_{t,\zeta}(x)\in\mb{C}^n\setminus(\overline{G_t}\cup\mb{S}^{{2n-1}}(R)),x\in(0,1),$ for all $t\in\sigma\overline{\mb{D}}$ and $\zeta \in \partial G_t$. Then there exist a family $(h_{t,\zeta})_{t\in \sigma\overline{\mb{D}},\zeta\in \partial G_t}$ of exposing maps for $G_t$ at $\zeta$, continuous with respect to all variables.
\end{theorem}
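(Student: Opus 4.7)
The plan is to give a parametric version of the Diederich--Forn\ae ss--Wold construction underlying Theorem \ref{Exposing}, exploiting the hypothesis on $\gamma_{t,\zeta}$ to provide a continuous choice of the ``escape arc'' along which one stretches a small piece of $\overline{G_t}$ outward into a position in which the image of $\zeta$ becomes globally strongly convex. The natural parameter space
\[
P:=\{(t,\zeta):t\in\sigma\overline{\mathbb{D}},\ \zeta\in\partial G_t\}
\]
is compact and metrizable by the $\mathcal{C}^2$ regularity of $\rho$, and the data $(G_t,\zeta,\gamma_{t,\zeta})$ depend continuously on $(t,\zeta)\in P$; this compactness will be essential in the approximation step.

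First I would form, for each $(t,\zeta)\in P$, the compact set $K_{t,\zeta}:=\overline{G_t}\cup\gamma_{t,\zeta}([0,1])$ and verify that the family admits a continuously varying basis of Stein neighborhoods. This uses the strict pseudoconvexity of $G_t$ together with the hypothesis that $\gamma_{t,\zeta}$ meets $\overline{G_t}$ only at $\zeta$ and reaches the sphere $\mathbb{S}^{2n-1}(R)$ transversally. Next I would assemble a $\mathcal{C}^2$-continuous family of injective holomorphic \emph{model} maps $\Phi_{t,\zeta}$, defined on shrinking neighborhoods of $K_{t,\zeta}$, with three features: $\Phi_{t,\zeta}$ is uniformly close to the identity on $\overline{G_t}$ off a small patch around $\zeta$; it stretches the cap of $\overline{G_t}$ at $\zeta$ along a thin tube about $\gamma_{t,\zeta}$, carrying $\zeta$ near $\gamma_{t,\zeta}(1)$; and in suitable local coordinates at $\Phi_{t,\zeta}(\zeta)$ the image of $G_t$ lies strictly on one side of the real tangent hyperplane. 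Continuity of $\Phi_{t,\zeta}$ in $(t,\zeta)$ follows from the $\mathcal{C}^2$-continuity of the arcs and the domains, using a continuously varying normal framing of $\gamma_{t,\zeta}$ in $\mathbb{C}^n$.

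Finally, I would apply a parametric Runge / Anders\'en--Lempert approximation on the continuous family of Stein neighborhoods from Step~1 to produce a jointly continuous family $(h_{t,\zeta})_{(t,\zeta)\in P}$ of holomorphic embeddings defined on common neighborhoods of $\overline{G_t}$ and approximating $\Phi_{t,\zeta}$ uniformly. Uniform closeness of $h_{t,\zeta}$ to the identity on $\overline{G_t}\setminus\mathbb{B}(\zeta,\varepsilon)$ together with the strict convexity engineered into $\Phi_{t,\zeta}$ near $\Phi_{t,\zeta}(\zeta)$ then forces $h_{t,\zeta}(\zeta)$ to be a globally strongly convex boundary point of $h_{t,\zeta}(G_t)$. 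The principal obstacle is producing a \emph{jointly} continuous approximation: the classical Anders\'en--Lempert theorem gives an approximating automorphism of $\mathbb{C}^n$ for each fixed parameter, but joint continuity demands uniform control on the approximating flows across $P$. I plan to handle this by covering the compact set $P$ with finitely many small open charts, running the Anders\'en--Lempert approximation fiberwise on each chart, and gluing the resulting local maps by a partition-of-unity and smoothing procedure in the spirit of the parametric peak-function construction of [\ref{AL1}].
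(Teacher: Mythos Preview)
Your outline follows the Diederich--Forn\ae ss--Wold strategy that the paper also uses, and you correctly identify joint continuity in $(t,\zeta)$ as the central difficulty. But the step you flag as the ``principal obstacle'' is also where your proposal has a genuine gap. You propose to run Anders\'en--Lempert fiberwise over finitely many charts in the parameter space $P$ and then glue the resulting local families by a partition of unity in the spirit of [\ref{AL1}]. This works for scalar peak functions because a convex combination (in the parameter) of functions $f_{t,\zeta}$ with $f_{t,\zeta}(\zeta)=1$ and $|f_{t,\zeta}|<1$ elsewhere can, after normalization, again be a peak function. It does \emph{not} work for holomorphic embeddings: if $h^1_{t,\zeta},\dots,h^q_{t,\zeta}$ are embeddings of a neighbourhood of $\overline{G_t}$ produced over different charts, then $\sum_j p_j(t,\zeta)\,h^j_{t,\zeta}$ is holomorphic in $z$ but in general neither injective nor exposing at $\zeta$. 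The paper avoids this by never averaging the embeddings themselves. The parameter-partition-of-unity is applied only at the level of the additive Cousin data inside a parametric Mergelyan theorem (Theorem~\ref{Mergelyan}), where one is gluing holomorphic \emph{functions}; injectivity of the approximants is then recovered by a separate compactness argument. The actual gluing of biholomorphic pieces---matching a local ``push $\zeta$ outward'' map with the identity away from $\zeta$---is done with the parametric Forstneri\v{c} splitting lemma (Theorem~\ref{Splitting}), which decomposes a near-identity injection on the overlap of a Cartan pair as $\beta\circ\alpha^{-1}$ and is exactly the tool designed to preserve injectivity under gluing. Your outline has no substitute for this.

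There is also a second, smaller gap in Step~2. Producing a model map $\Phi_{t,\zeta}$ that is simultaneously (i) holomorphic and injective on a neighbourhood of $\overline{G_t}$, (ii) close to the identity off a cap at $\zeta$, and (iii) strongly convexifying at the image of $\zeta$, is already nontrivial for a single $(t,\zeta)$ and is the content of Theorem~\ref{Exposing}. The paper builds this in stages: a parametric Narasimhan lemma via the Levi polynomial gives a \emph{local} convexifying map $\Psi_{t,\zeta}$; this is then globalised to an automorphism $\Theta_{t,\zeta}$ of $\mathbb{C}^n$ agreeing with $\Psi_{t,\zeta}$ to third order at $0$ using the parametric jet-interpolation of [\ref{Ugolini}]; a smooth stretching map $g_{t,\zeta}$ along the straightened arc is approximated by a holomorphic embedding $f_{t,\zeta}$ via the parametric Mergelyan theorem; and finally a map $F_{t,\zeta}$ pushing $\zeta$ to the far end of the arc is assembled from a fixed local model $\Phi$ on a ball and the splitting lemma. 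Your description collapses all of this into a single ``model map'' chosen by a normal framing of $\gamma_{t,\zeta}$, which does not by itself yield holomorphicity on all of $\overline{G_t}$, nor the needed local convexity, in a way that varies continuously with the parameter.
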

Here and below $\mb{B}(a,R)$ stands for the open ball in $\mb{C}^n$ with center at $a$ and radius $R>0$, and $\mb{S}^{{2n-1}}(R):=\partial\mb{B}(0,R)$.
\begin{rem}
Our assumption concerning the $\mc{C}^2$-continuity of the family $(\gamma_{t,\zeta})_{t\in\sigma\overline{\mb{D}},\zeta\in\partial G_t}$ should be understood in the following way:\\
For each $t$ let $\Gamma_t$ be a neighbourhood of $\partial G_t$ with $\nabla r_t\neq 0$ on $\Gamma_t,$ where $r_t:=\rho(t,\cdot)$ and $\nabla r_t$ denotes its gradient. The neighbourhoods $\Gamma_t$ may be chosen to depend in a $\mc{C}^2$-continuous way on $t$.\\
Then there {exist positive constants $\sigma'\in(\sigma,1)$ and $\tilde{\varepsilon}$} such that the family $(\gamma_{t,\zeta})_{t\in\sigma\overline{\mb{D}},\zeta\in\partial G_t}$ may be extended to a $\mc{C}^2$-continuous family 
$$(\gamma_{t,\zeta})_{t\in\sigma' {\mb{D}},\zeta\in \bigcup_{|\kappa|{<}{\tilde{\varepsilon}}}\partial G^{(\kappa)}_t}$$ 
of smooth embedded arcs $[0,1]\rightarrow \mb{C}^n$ such that $\gamma_{t,\zeta}(0)=\zeta,\gamma_{t,\zeta}(1)\in\mb{S}^{{2n-1}}(R)$ and $\gamma_{t,\zeta}(x)\in\mb{C}^n\setminus(\overline{G^{(\kappa)}_t}\cup\mb{S}^{{2n-1}}(R)),x\in(0,1),$ for all $t\in\sigma' {\mb{D}}$ and $\zeta \in \partial G^{(\kappa)}_t,|\kappa|{<}{\tilde{\varepsilon}}$. Here, for small $|\kappa|$ we have put 
$$G^{(\kappa)}_t:=(G_t\setminus\Gamma_t)\cup\{z\in \Gamma_t:r_t(z)<\kappa\}.$$\label{C1assumption}
\end{rem}
\indent Notice that the assumption concerning the existence of the family $(\gamma_{t,\zeta})$ of suitable embedded arcs is completely in the spirit of Theorem 1.3 from [\ref{PAMS18}], which is a version of our result for a single domain. This kind of assumption is not present in Theorem \ref{Exposing}, which is indeed a "pointwise" result for single domain. It seems that the existing methods do not allow to relax this additional assumption, with the main obstruction being of rather topological nature. On the other hand, in certain subclasses of the class of strictly pseudoconvex domains the existence of such family of embedded arcs need not be assumed: in Example \ref{Example} we show that if the domains $G_t$ are all strongly linearly convex, then the family $(\gamma_{t,\zeta})$ can always be constructed and therefore the family $(h_{t,\zeta})_{t\in \sigma\overline{{\mb{D}}},\zeta\in \partial G_t}$ always exist. This latter should be compared with Theorem 1.4 from [\ref{JGA14}], which says that if a single domain $G\s\s\mb{C}^n$ is convex, smoothly bounded, and of finite type $2l (l\in\mb{N})$, then there exists a \emph{smooth} family $(h_{\zeta})_{\zeta\in \partial G}$ of exposing maps for $G$ at $\zeta$ (moreover, in such a case, each $h_\zeta$ may be chosen to be a holomorphic automorphism of $\mb{C}^n)$, and with Theorems 5.1 and 5.2 from [\ref{PAMS18}].\\ 
\indent We propose the proof of Theorem \ref{main}, which merges the methods of [\ref{JGA14}] with those from [\ref{PAMS18}]. In the first part of the proof, ideologically similar to Lemma 3.1 from [\ref{JGA14}], we deliver some parametric version of Narasimhan lemma (see [\ref{GK}] for another approach), thus constructing the family of local and locally exposing maps. Then, with the aid of [\ref{Ugolini}], we find the family of global and locally exposing maps. In the final part of the proof, based on ideas from [\ref{PAMS18}], we pass to the construction of the required family of exposing maps. The main tools will be the parametric version of the so-called Forstneri\v{c} splitting lemma for biholomorphic maps due to Simon (see Theorem \ref{Splitting}) and the following parametric version of higher-dimansional Mergelyan approximation theorem [\ref{Legacy}, Theorem 21]. Although some authors refer to certain parametric versions of Mergelyan theorem (cf. [\ref{JGA14}]), we were not able to find any in the literature. Also, versions referred in mentioned sources seem to be not suitable for our purposes.
\begin{theorem}\label{Mergelyan}
Let $S=K\cup M\s\mb{C}^n$ be admissible in the sense of \emph{[\ref{Legacy}]}, i.e. $S$ and $K$ {are Stein} compacts and $M$ is a totally real submanifold of class $\mc{C}^k$ (with boundary) with some $k\in\mb{N}$, let $(g_t)_{t\in T}\s\mc{C}^k(W)\cap\mc{O}(V)$ be a family of functions continuously dependent on all variables together with the parameter $t\in T$, where $T$ is a compact metric space and $V,W$ are some open neighbourhoods of $K,S$, respectively. Then there exists an open neighbourhood $\Omega$ of $S$ { such that for any $\varepsilon>0$ there exist }$(f_t)_{t\in T}\s\mc{O}(\Omega),$ a family of functions continuously dependent on all variables and such that for all $t\in T$ we have $\|g_t-f_t\|_{\mc{C}^k(S)}<\varepsilon.$ 
\end{theorem}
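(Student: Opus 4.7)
\textbf{Proof plan for Theorem \ref{Mergelyan}.}

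The plan is to combine the non-parametric Mergelyan theorem [\ref{Legacy}, Theorem 21] with a partition of unity argument on $T$. The key observation is that a finite convex combination (with scalar weights) of holomorphic functions is again holomorphic, so glueing in $t$ via a partition of unity on $T$ does not destroy holomorphy in the spatial variables.

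First, invoke [\ref{Legacy}, Theorem 21] to fix an open neighbourhood $\Omega$ of $S$ on which every function from $\mc{C}^k(W)\cap\mc{O}(V)$ admits a holomorphic approximation in the $\mc{C}^k(S)$-norm to arbitrary precision; if in [\ref{Legacy}] the neighbourhood is a priori allowed to depend on the individual function, then the finite intersection of the neighbourhoods appearing below will play the role of $\Omega$. Next, the hypothesis that $(g_t)$ depends continuously on all variables implies that the $z$-derivatives up to order $k$ are jointly continuous on $T\times S$, so compactness of $T$ yields a finite open cover $U_1,\dots,U_N$ of $T$ and sample points $t_j\in U_j$ with
$$\|g_t-g_{t_j}\|_{\mc{C}^k(S)}<\varepsilon/2\qquad\text{for all } t\in U_j.$$
For each $j$, apply the non-parametric theorem to $g_{t_j}$ to obtain $f_j\in\mc{O}(\Omega)$ with $\|g_{t_j}-f_j\|_{\mc{C}^k(S)}<\varepsilon/2$.

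Let $\chi_1,\dots,\chi_N\colon T\to[0,1]$ be a continuous partition of unity subordinate to $\{U_j\}$, which exists by paracompactness of $T$, and define
$$f_t(z):=\sum_{j=1}^N \chi_j(t)\,f_j(z),\qquad (t,z)\in T\times\Omega.$$
For fixed $t$, $f_t$ is a $\mb{C}$-linear combination of elements of $\mc{O}(\Omega)$ and therefore lies in $\mc{O}(\Omega)$; joint continuity of $(t,z)\mapsto f_t(z)$ is immediate from continuity of the $\chi_j$ and the $f_j$. For the error bound, whenever $\chi_j(t)\neq 0$ we have $t\in U_j$ and
$$\|g_t-f_j\|_{\mc{C}^k(S)}\leq \|g_t-g_{t_j}\|_{\mc{C}^k(S)}+\|g_{t_j}-f_j\|_{\mc{C}^k(S)}<\varepsilon,$$
so, using $\sum_j\chi_j(t)=1$ together with the fact that each $\chi_j(t)$ is a scalar independent of $z$,
$$\|g_t-f_t\|_{\mc{C}^k(S)}\leq\sum_{j=1}^N\chi_j(t)\|g_t-f_j\|_{\mc{C}^k(S)}<\varepsilon.$$

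The only technical point to watch is whether the neighbourhood $\Omega$ produced by [\ref{Legacy}, Theorem 21] can be chosen uniformly in the function being approximated; if not, one simply replaces $\Omega$ by the finite intersection of the neighbourhoods corresponding to the $f_j$, which remains an open neighbourhood of $S$. Everything else is routine: convex combinations preserve holomorphy in $z$, the partition of unity gives the joint continuity for free, and the $z$-independence of the weights $\chi_j(t)$ is precisely what makes the $\mc{C}^k(S)$-triangle inequality collapse to the clean bound above.
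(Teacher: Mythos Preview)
Your argument is correct and is a genuinely different---and considerably shorter---route than the one the paper takes. The paper reproves the non-parametric result of [\ref{Legacy}, Theorem~21] while tracking the parameter $t$ through every step: Oka--Weil with parameters in Step~1, then in Step~2 the approximation on the totally real part via [\ref{Legacy}, Proposition~2] together with a parameter-dependent solution of the first Cousin problem with bounds. You instead treat the non-parametric theorem as a black box applied at finitely many sample parameters $t_1,\dots,t_N$, and then average in $t$ with a partition of unity on $T$; since the weights $\chi_j(t)$ are constant in $z$, holomorphy on $\Omega$ and the $\mc{C}^k(S)$-estimate survive the averaging. This is clean and entirely adequate for the theorem as stated.

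Two remarks are worth recording. First, your fallback ``take the finite intersection of the neighbourhoods'' does not actually salvage a single $\Omega$ independent of $\varepsilon$, because the number $N$ of sample points, and hence the intersection, changes with $\varepsilon$. Fortunately the issue does not arise: inspection of the proof in [\ref{Legacy}] (reproduced in the paper) shows that $\Omega$ is built from the geometry of $S$, $K$, $M$ alone, so one fixed $\Omega$ works for every function and every $\varepsilon$. Second, the paper's more laborious route is not gratuitous: the author explicitly needs, in the proof of Theorem~\ref{main}, a strengthened version (Remark~\ref{RemarkMergelyan}) with $\mc{C}^2$-dependence on parameters, variable domains, interpolation at a point, and---crucially---approximating \emph{embeddings} by \emph{embeddings}. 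Your averaging trick does not obviously extend to that setting, since a convex combination of injective holomorphic maps need not be injective; the paper's step-by-step construction is what allows the subsequent injectivity and interpolation corrections to be grafted on.
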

Theorem \ref{Mergelyan} is proved in Section \ref{MergelyanProof}, while the proof of our main result, Theorem \ref{main}, is presented in Section \ref{mainProof}. Finally, in Section \ref{SLCC} we discuss the case of the family of strongly linearly convex domains. We start with some preliminary results, presented in Section \ref{Prelim}, and end with some concluding remarks in Section \ref{concluding}.\\
\indent The author would like to thank his Teacher, Professor Marek Jarnicki for his encouragement to undertake this topic. He is also indebted to Andrzej Czarnecki and Andrea Spiro for valuable consultations concerning the issues related to Example \ref{Example}.
\section{Preliminaries}\label{Prelim}
\begin{df} Let $G\s\s\mb{C}^n$ be a domain. It is called a \emph{strictly pseudoconvex} if there exist a neighborhood $U$ of $\partial G$ and a \emph{defining function} $r:U\rightarrow\mb{R}$ of class $\mc{C}^2$ on $U$ and such that
\begin{enumerate}[(i)]
\item $G\cap U=\{z\in U:r(z)<0\}$,\label{Condition1}
\item $(\mb{C}^n\setminus\overline{G})\cap U=\{z\in U:r(z)>0\},\label{Condition2}$
\item $\nabla r(z)\neq 0$ for $z\in \partial D,$ where $\nabla r(z):=\left(\frac{\partial r}{\partial\overline{z}_1}(z),\cdots,\frac{\partial r}{\partial\overline{z}_n}(z)\right)$,\label{Condition3}
\end{enumerate}
together with $$\mc{L}_r(z;X)>0\text{\ for\ } z\in\partial G\text{\ and\ nonzero\ }X\in T_z^{\mb{C}}(\partial G),$$  
where $\mc{L}_r$ denotes the Levi form of $r$ and $T_z^{\mb{C}}(\partial G)$ is the complex tangent space to $\partial G$ at $z$.
\end{df}
\indent It is known that $U$ and $r$ can be chosen to satisfy (\ref{Condition1})-(\ref{Condition3}) and, additionally:
\begin{enumerate}[(iv)]
\item $\mc{L}_r(z;X)>0$ for $z\in U$ and all nonzero $X\in\mb{C}^n,$\label{Condition4}
\end{enumerate}
cf. [\ref{Kra1}]. \\
\indent Note that for a function $r$ as above and a point $\zeta\in\partial G$, the Taylor expansion of $r$ at $\zeta$ has the following form:
\begin{equation}
r(z)=r(\zeta)-2\text{Re}P_r(z;\zeta)+\mc{L}_{r}(\zeta;z-\zeta)+o(\|z-\zeta\|^2),\label{Taylor}
\end{equation}
where 
$$\displaystyle{
P_r(z;\zeta):=-\sum_{j=1}^n\frac{\partial r}{\partial z_j}(\zeta)(z_j-\zeta_j)-\frac{1}{2}\sum_{i,j=1}^n\frac{\partial^2r}{\partial z_i\partial z_j}(\zeta)(z_i-\zeta_i)(z_j-\zeta_j)
}$$
is the \emph{Levi polynomial of $r$ at $\zeta$}.\\
\indent In Section \ref{SLCC} we shall discuss the stronger notion than that of strictly pseudoconvex domains. Namely, we need the following
\begin{df}
A domain $G\s\s\mb{C}^n$ with $\mc{C}^2$ boundary is called \emph{strongly linearly convex} if there exists a defining function $r$ for $G$ with
$$
\mc{L}_r(z;X)>\left|\sum_{j,k=1}^n\frac{\partial^2 r}{\partial z_j\partial z_k}(z)X_jX_k\right|,
$$
for all $z\in\partial G$ and all nonzero $X=(X_1,\ldots, X_n)\in T_z^{\mb{C}}(\partial G).$
\end{df}
Finally, an important ingredient of the proof of Theorem \ref{main} is the parametric version of Forstneri\v{c}'s splitting lemma for biholomorphic maps close to identity. 
\begin{df}
A pair $(A,B)$ of compact subsets of $\mb{C}^n$ is called a \emph{Cartan pair}, if 
\begin{enumerate}[(i)]
\item $A,B,A\cup B,A\cap B$ are all Stein compacts,
\item $\overline{A\setminus B}\cap\overline{B\setminus A}=\varnothing.$
\end{enumerate}
\end{df}
The following  comes from [\ref{SimonPreprint}] (see also [\ref{SimonThesis}], and [\ref{Forstneric}] for a non-parameter version). Henceforth for a set $Z\s\mb{C}^n$ and a number $\delta>0$ we abbreviate $Z^{\delta}:=\bigcup_{z\in Z}\mb{B}(z,\delta)$.
\begin{theorem}
Let $\mc{T}$ be a nonempty compact topological space, let $(A_t,B_t)_{t\in\mc{T}}$ be an \emph{admissible} (in the sense of \emph{[\ref{SimonPreprint}]}) family of Cartan pairs, and let $\mu>0.$ Then there exists a $\tau>0$ such that for any $\eta>0$ there exists an $\varepsilon_{\eta}>0$ with the property that for any family $(\gamma_{t}:(A_t\cap B_t)^{\mu}\rightarrow\mb{C}^n)_{t\in\mc{T}}$ of injective holomorphic maps satisfying $\|\gamma_t-\text{\emph{\ Id}}\|_{(A_t\cap B_t)^{\mu}}<\varepsilon_{\eta},t\in\mc{T}$ and depending continuously with respect to all variables, there exist families $(\alpha_t:A_t^{2\tau}\rightarrow\mb{C}^n)_{t\in\mc{T}}, (\beta_t:B_t^{2\tau}\rightarrow\mb{C}^n)_{t\in\mc{T}}$ of injective holomorphic maps, depending continuously on all variables, and such that for all $t\in\mc{T}$ we have
\emph{\begin{enumerate}
\item \emph{$\gamma_t=\beta_t\circ\alpha_t^{-1}$ on $(A_t\cap B_t)^{\tau}$, and}
\item $\|\alpha_t-\text{\ Id}\|_{(A_t)^{2\tau}}<\eta, \|\beta_t-\text{\ Id}\|_{(B_t)^{2\tau}}<{\eta}$.
\end{enumerate}\label{Splitting}}
\end{theorem}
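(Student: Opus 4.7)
The plan is to run Forstneri\v{c}'s non-parametric splitting (cf.~[\ref{Forstneric}]) with continuous dependence on $t$ built into every step. At the heart of the argument lies a Newton-type iteration whose linearized step is a uniformly bounded additive Cartan decomposition on the pair $(A_t,B_t)$.

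\textbf{Step 1 (parametric additive Cartan lemma).} I would first produce, for some $0<\mu_1<\mu_0\le\mu$ independent of~$t$, a family of bounded linear operators
$$
S_t\colon\mathcal{O}\bigl((A_t\cap B_t)^{\mu_0}\bigr)^n\longrightarrow\mathcal{O}\bigl(A_t^{\mu_1}\bigr)^n\times\mathcal{O}\bigl(B_t^{\mu_1}\bigr)^n,\qquad S_t(c_t)=(a_t,b_t),\ b_t-a_t=c_t,
$$
continuous in $t$ and uniformly bounded $\|S_t\|\le C$. The construction is standard: choose a $t$-continuous family of cut-offs $\chi_t$, equal to $1$ near $A_t\setminus B_t$ and to $0$ near $B_t\setminus A_t$, set $\tilde a_t:=-(1-\chi_t)c_t$, $\tilde b_t:=\chi_tc_t$ so that $\tilde b_t-\tilde a_t=c_t$, and correct holomorphically by a $t$-continuous solution of $\bar\partial u_t=(\bar\partial\chi_t)c_t$, obtained from H\"ormander's $L^2$-theory via the minimal-norm (hence linear and continuous) solution on uniform Stein neighbourhoods of $A_t\cup B_t$. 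Admissibility of $(A_t,B_t)_{t\in\mathcal{T}}$ is precisely what provides those uniform neighbourhoods, uniform plurisubharmonic weights, and the $t$-continuous cut-offs.

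\textbf{Step 2 (Newton iteration with quadratic convergence).} Given the input $\gamma_t=\mathrm{Id}+c_t^{(0)}$ with $\|c_t^{(0)}\|_{(A_t\cap B_t)^{\mu_0}}<\varepsilon_\eta$, set $(a_t^{(0)},b_t^{(0)}):=S_t(c_t^{(0)})$, $\alpha_t^{(0)}:=\mathrm{Id}+a_t^{(0)}$, $\beta_t^{(0)}:=\mathrm{Id}+b_t^{(0)}$, and define
$$
\gamma_t^{(1)}:=\bigl(\beta_t^{(0)}\bigr)^{-1}\!\circ\gamma_t\circ\alpha_t^{(0)}.
$$
A direct Taylor expansion on a slightly shrunken neighbourhood yields $\gamma_t^{(1)}=\mathrm{Id}+c_t^{(1)}$ with $\|c_t^{(1)}\|=O(\varepsilon_\eta^2)$. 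Iterating, and choosing a decreasing sequence $\mu_0>\mu_1>\mu_2>\cdots\to 2\tau>0$ with geometrically small gaps, the partial compositions $\alpha_t^{(0)}\circ\cdots\circ\alpha_t^{(k)}$ and $\beta_t^{(0)}\circ\cdots\circ\beta_t^{(k)}$ are uniformly Cauchy on $A_t^{2\tau}$ and $B_t^{2\tau}$ respectively, by virtue of the quadratic rate $\|c_t^{(k)}\|\lesssim(C\varepsilon_\eta)^{2^k}$. Their limits $\alpha_t,\beta_t$ satisfy $\gamma_t=\beta_t\circ\alpha_t^{-1}$ on $(A_t\cap B_t)^\tau$, and taking $\varepsilon_\eta$ small enough in terms of $\eta$ secures both the closeness bound $\|\alpha_t-\mathrm{Id}\|<\eta$, $\|\beta_t-\mathrm{Id}\|<\eta$ and injectivity (via the inverse function theorem, uniformly in~$t$). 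Continuity in $t$ is preserved at each Newton step (the operators $S_t$, inversion, and composition all act continuously) and survives the uniform limit.

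\textbf{Main obstacle.} The genuinely delicate part is the \emph{continuous, uniformly-bounded} parametric Cartan splitting of Step~1: coaxing out a single neighbourhood scale $\mu_1$, a family of cut-offs, and a $t$-continuous linear $\bar\partial$-solution operator, all with uniform bounds over the admissible family $(A_t,B_t)_{t\in\mathcal{T}}$. Once this is in hand, the Newton scheme and the bookkeeping of domain losses are routine, since the quadratic rate of convergence easily absorbs the geometrically small shrinkages and requires no estimate sharper than $\|S_t\|\le C$.
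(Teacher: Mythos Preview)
The paper does not prove this theorem at all: it is quoted verbatim as a known result from Simon [\ref{SimonPreprint}] (with [\ref{SimonThesis}] and the non-parametric [\ref{Forstneric}] as further references), and is used as a black box in the proof of Theorem~\ref{main}. So there is no ``paper's own proof'' to compare your proposal against.

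That said, your sketch is essentially the strategy Simon follows: a parametric additive Cousin-I decomposition with uniform bounds, fed into a Newton-type iteration with controlled domain shrinkage. Your identification of the main obstacle is accurate --- the substance of Simon's contribution is precisely the construction of the $t$-continuous, uniformly bounded linear splitting operator over the admissible family, after which the iteration is bookkeeping inherited from the non-parametric case. If you were to flesh this out, the points requiring care are the precise meaning of ``admissible'' (which packages the uniform Stein neighbourhoods and continuous cut-offs you invoke) and the verification that the minimal-$L^2$-norm $\bar\partial$-solution really varies continuously in $t$ under those hypotheses; but as a high-level outline your proposal is correct and matches the cited source.
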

\begin{rem}
In the proof of Theorem \ref{main} we shall apply Theorem \ref{Splitting} in the situation where for sufficiently small positive $k$
$$
A_{t,\zeta}=\overline{D_{t,\zeta}}\cap\overline{\mb{B}}(\zeta,k),\quad B_{t,\zeta}=\overline{D_{t,\zeta}}\setminus{{\mb{B}}}(\zeta,\frac{k}{2}),
$$
with the indices $(t,\zeta)$ taken from a compact set of an Euclidean space and $(D_{t,\zeta})$ forming a family of strictly pseudoconvex domains varying in a $\mc{C}^2$-continuous manner, and with the property that $\zeta\in\partial D_{t,\zeta}.$ In this case, it is possible to choose $(\alpha_{t,\zeta})$ and $(\beta_{t,\zeta})$ as above, where the functions $\alpha_{t,\zeta}$ additionally interpolate identity at $\zeta$ to an arbitrarily high order, cf. remarks from Lemma 5.3 in [\ref{PAMS18}] and from Lemma 5.2 in [\ref{JGA14}]. \label{Interpolation}
\end{rem}
\section{Proof of Theorem \ref{Mergelyan}}\label{MergelyanProof}
The proof of Theorem \ref{Mergelyan} goes along the lines of the proof of Theorem 21 from [\ref{Legacy}], with the most important modification at the end, where we use methods of [\ref{Manne}], together with the parameter dependence of the solution of the 1st. Cousin Problem with bounds (to be deduced from the proof of Theorem VII.6.3 in [\ref{Ran}]). We {include} the proof here for the convenience of the {reader} and in order to be able to point out the modifications of it needed in getting the target described in Remark \ref{RemarkMergelyan} below. 
\begin{proof} 
\noindent\textit{Step 1.} Suppose that there exists a $t\in T$ such that spt$g_t\cap K\neq\varnothing$.\\
Take $\tilde{\Omega}$, a Stein neighbourhood of $S$ such that $K_0:=\hat{K}_{\mc{O}(\tilde{\Omega})}\s V$ (cf. Lemma 2 in [\ref{Legacy}]). Let $K_1\s V$ be a $\mc{O}(\tilde{\Omega})$-convex compact set such that $K_0\s$int$K_1$ and $s\bs{e}_{1}\notin K_1$ (this latter condition is not needed for the proof - we will use it later, cf. Remark \ref{RemarkMergelyan}). Choose $\chi$, a smooth cutoff function with support equal $K_1$ and with the property that $\chi=1$ in a neighbourhood of $K_0$. By Oka-Weil theorem with parameters (Theorem 2.8.1 in [\ref{Forstneric}]), there exists a continuous family $(\varphi_t)_{t\in T}\s\mc{O}(\tilde{\Omega})$ with
$$
\text{sup}_{z\in K_1,t\in T}|g_t(z)-\varphi_t(z)|<\varepsilon.
$$ 
Let 
$$
\tilde{g}_t:=\chi\varphi_t+(1-\chi)g_t=\varphi_t+(1-\chi)(g_t-\varphi_t).
$$
{Then $\|\tilde{g}_t-g_t\|_{\mc{C}^k(S)}< C\varepsilon$ for all $t\in T$ with positive constant $C$ depending only on $K$ and $\chi$.} Therefore, it remains to show that we are able to approximate the family $(\psi_t:=(1-\chi)(g_t-\varphi_t))_{t\in T}\s\mc{C}^k(S)$, which enjoys the property that the supports of the functions $\psi_t$ do not intersect some (common) neighbourhood of $K_0$. We therefore can pass to the following:\\
\noindent\textit{Step 2.} Approximation of $(\psi_t)_{t\in T}$ (if the condition in Step 1 is empty, we go directly to Step 2).\\
If for some $t\in T$ we have spt$\psi_t\cap\partial M\neq\varnothing$, we take a bigger totally real submanifold, still denoted by $M$, of class $\mc{C}^k$, by extending $M$ through $\partial M$, and we extend all functions $\psi_t{|_{M}}$ to the functions of class $\mc{C}^k$ with compact supports contained in {the} relative interior of ${M}$ (we keep the notation $\psi_t$ for these extensions). Note that this extension of $M$ may be taken one good for all $t\in T$ and the extensions of functions $\psi_t$ may be taken to depend continuously on all variables. Using now the fact that $\chi$ above equals 1 on a neighbourhood, say $V_0$, of $K_0$, we may multiply everything by {another cutoff function, obtaining} the existence of some compact set in $F\s M\setminus K$ such that for all $t\in T$ we have spt$\psi_t|_M\s F.$\\
We cover $F$ by a finite number of domains $M_1,\ldots, M_m\s M$ such that for every $j\in\{1,\ldots, m\}$ Proposition 2 from [\ref{Legacy}] (one may also bear in mind Proposition at the beginning of Section 3 from [\ref{Manne}]) holds true for $M_j$ (observe that the functions $f_{\varepsilon}$ constructed therein changes continuously if the input data are perturbed in a continuous way) and $\bigcup_{j=1}^m\overline{M}_j\s M\setminus K.$ Let $(\chi_j\in\mc{C}^k_0(M_j))_{j=1}^m$ be a partition of unity subordinated to the cover $M_1,\ldots,M_m$ of a neighbourhood of $F$ so that for all $t\in T$ we have $\psi_t=\sum_{j=1}^m\chi_j\psi_t.$ We see it suffices to approximate every single family $(\chi_j\psi_t)_{t\in T}$ with $j$ fixed. Without loss of generality, assume that spt$\psi_t\s\s M_1,t\in T$. Let $U\s\mb{C}^n$ be a neighbourhood of $\partial M_1$ as in (b) from Proposition 2 in [\ref{Legacy}] (observe it is independent of $\psi_t$). Take open sets $A,B\s\mb{C}^n$ such that
$$
M_1\s B,\quad S\setminus M_1\s A, \quad A\cap B\s U. 
$$   
Let $\Omega$ be a Stein neighbourhood of $S$ with $\Omega\s (A\cup B)\cap\tilde{\Omega}$ and define 
$$
\Omega_A:=\Omega\cap A,\quad \Omega_B:=\Omega\cap B.
$$
By shrinking $\Omega$ little bit, we may assume it is a strictly pseudoconvex domain with smooth boundary. Now, analyzing the proof of Theorem VII.6.3 from [\ref{Ran}], we see that the solution of the 1st. Cousin Problem with bounds depends continuously on parameters if only input functions are entire and taken to also depend continuously on parameters. Now, if we consider the family $(h_t)_{t\in T}\s\mc{O}(\mb{C}^n)$ of functions, continuously dependent on all variables, given by Proposition 2 from [\ref{Legacy}], with
$$
\|\psi_t-h_t\|_{\mc{C}^k(M{_1})}<\varepsilon\text{\ and\ }\|h_t\|_U<\varepsilon,\quad t\in T,
$$ 
we find, by above observation, the continuous (in all variables) families of functions $(h_{t}^A)_{t\in T}\in\mc{O}(\Omega_A)$ and $(h_{t}^B)_{t\in T}\in\mc{O}(\Omega_B)$ such that for a positive constant $D$, independent of $t\in T$, we have
$$
\|h^A_t\|_{\Omega_A}<D\varepsilon, \|h^B_t\|_{\Omega_B}<D\varepsilon,
$$
and
$$
h_t=h^A_t-h^B_t
$$
on $\Omega\cap A\cap B$ for all $t\in T.$ Finally we put $f_t:=h_t+h^B_t$ on $\Omega_B$ and $f_t:=h^A_t$ on $\Omega_A.$ Invoking the Cauchy estimates, we get the conclusion.
\end{proof}
\begin{rem}\label{RemarkMergelyan}
Actually, in the proof of Theorem \ref{main} we shall need stronger version of Theorem \ref{Mergelyan}, where the domains of definition of certain \emph{injections} $g_t$ will vary, the functions $g_t$ themselves will depend in a $\mc{C}^2$-continuous way on all variables and we will need to construct a family $f_t$ of holomorphic \emph{embeddings}, depending in a $\mc{C}^2$-continuous way on all variables and admitting an interpolation to order 3 at a certain point. We shall include the details within the proof of Theorem \ref{main}.  
\end{rem}
\section{Proof of Theorem \ref{main}}\label{mainProof}
\begin{proof}[Proof of Theorem \ref{main}] Let $r_t:=\rho(t,\cdot),t\in\mb{D}$. As in Remark \ref{C1assumption}, for each $t$ let $\Gamma_t$ be a neighbourhood of $\partial G_t$, $\mc{C}^2$-continuously dependent on $t$, and with $\nabla r_t\neq 0$ on $\Gamma_t.$ {Take positive $\sigma'\in(\sigma,1)$ and $\tilde{\varepsilon}$} such that the family $(\gamma_{t,\zeta})_{t\in\sigma\overline{\mb{D}},\zeta\in\partial G_t}$ may be extended to a $\mc{C}^2$-continuous family 
$$(\gamma_{t,\zeta})_{t\in\sigma' {\mb{D}},\zeta\in \bigcup_{|\kappa|{<\tilde{\varepsilon}}}\partial G^{(\kappa)}_t}$$ 
of smooth embedded arcs $[0,1]\rightarrow \mb{C}^n$ such that $\gamma_{t,\zeta}(0)=\zeta,\gamma_{t,\zeta}(1)\in\mb{S}^{{2n-1}}(R)$ and $\gamma_{t,\zeta}(x)\in\mb{C}^n\setminus(\overline{G^{(\kappa)}_t}\cup\mb{S}^{{2n-1}}(R)),x\in(0,1),$ for all $t\in {\mb{D}}$ and $\zeta \in \partial G^{(\kappa)}_t,|\kappa|{<\tilde{\varepsilon}}$.\\
\indent After possible decreasing of $\tilde{\varepsilon}$, there exists a $\mc{C}^2$-continuous family $$(l_{t,\zeta})_{t\in{\mb{D}},\zeta\in \bigcup_{|\kappa|{<\tilde{\varepsilon}}}\partial G^{(\kappa)}_t}$$ of global changes of coordinates, being compositions of translation and unitary transformation, such that $l_{t,\zeta}(\zeta)=0$ and with the property that $\bs{n}_{t,\zeta}$, a unit exterior normal vector to $\partial G_t^{(\kappa)}$ at $\zeta$ is transformed to a vector $(1,0\ldots,0).$ In particular, in these new coordinates we have $T^{\mb{C}}_{\zeta}(\partial G_t^{(\kappa)})=\{w_1=0\}.$ Write $l_{t,\zeta}(z)=\Phi_{t,\zeta}(z-\zeta)$ with $\Phi_{t,\zeta}$ being a unitary matrix depending $\mc{C}^2$-continuously on $t\in{\mb{D}}$ and $\zeta\in \partial G_t^{(\kappa)}.$\\
\indent Denote by $P_{t,\zeta}$ the Levi polynomial of $r_t$ at $\zeta.$ It is standard that there exist positive constants $C,\xi,$ and $\lambda$ such that for any $t\in\sigma'{\mb{D}}$, any $\zeta\in\Gamma_t$ with dist$(\zeta,\partial G_t)<\xi$, and any $z\in\mb{B}(\zeta,\lambda)$ we have
$$
r_t(z)\geq r_t(\zeta)-2{\text Re}P_{t,\zeta}(z)+C\|z-\zeta\|^2.
$$ 
In particular, if $\zeta\in\partial G_t$ and $z\in\overline G_t$ is such that $\|z-\zeta\|<{\lambda}$, we have
$$
-{\text Re}P_{t,\zeta}(z)\leq - \frac{C}{2}\|\zeta-z\|^2.
$$
Putting $\hat{P}_{t,\zeta}(z):=P_{t,\zeta}(l^{-1}_{t,\zeta}(z))$ we get the estimate
\begin{equation}
-\text{Re}\hat{P}_{t,\zeta}(z)\leq - \frac{C}{2}\|\zeta-l^{-1}_{t,\zeta}(z)\|^2=-\frac{C}{2}\|\Phi_{t,\zeta}^{-1}z\|^2=-\frac{C}{2}\|z\|^2\label{eq1}
\end{equation}
for $z\in l_{t,\zeta}(G_t)\cap\mb{B}(0,p)$ (and all $t\in\sigma'\mb{D}$ and $\zeta\in\partial G_t$) with some positive $p$. Consider the mapping 
$$\Psi_{t,\zeta}(z):=(-\hat{P}_{t,\zeta}(z),z_2,\ldots,z_n)=(-P_{t,\zeta}(l^{-1}_{t,\zeta}(z)),z_2,\ldots,z_n),$$
for $z=(z_1,\ldots,z_n)\in\mb{C}^n$. We have ${\Psi}_{t,\zeta}(0)=0$, ${\Psi}_{t,\zeta}$ is injective and holomorphic in $\mb{B}(0,\rho)$ with some positive $\rho$ (independent of $t\in\sigma'\mb{D},\zeta\in\partial G_t$). Also, { $\bs{n}_{t,\zeta}$ becomes $(1,0,\ldots,0)$ in the local coordinates near $\zeta$ given by ${\Psi}_{t,\zeta}\circ l_{t,\zeta}$.}\\
Define 
$$
\Omega_{t,\zeta}:=l_{t,\zeta}(G_t).
$$
Recall that the elements of $\Psi_{t,\zeta}(\Omega_{t,\zeta}\cap\mb{B}(0,\rho))$ are of the form $(-\hat{P}_{t,\zeta}(z),z_2,\ldots,z_n)$ for $z\in\Omega_{t,\zeta}\cap\mb{B}(0,\rho).$\\
{ For $z\in\Omega_{t,\zeta}\cap\mb{B}(0,\rho)$, writing $z_1=x_1+ix_2$ and $z'=(z_2,\ldots,z_n)$, we get from (\ref{eq1}) the following estimate
$$
-{\text Re}\hat{P}_{t,\zeta}(z)-\omega_{t,\zeta}(-\text{Im}\hat{P}_{t,\zeta}(z),z')\leq 0,
$$}
where 
$$
\omega_{t,\zeta}(x_2,z'):=-D(x_2^2+\|z'\|^2)
$$
with some positive constant $D$, independent of $t\in\sigma'\mb{D}$ and $\zeta\in\partial G_t$. In particular,
\begin{equation}
\overline{\Psi_{t,\zeta}(\Omega_{t,\zeta}\cap\mb{B}(0,\rho))}\s\{z\in\mb{C}^n:x_1-\omega_{t,\zeta}(x_2,z')\leq 0\},\label{eq2}
\end{equation}
which implies that near $0$ the domain $\Psi_{t,\zeta}(\Omega_{t,\zeta})$ is strictly 2-convex in the sense of [\ref{JGA14}].\\
\indent All the above constructions remain valid if we allow $\zeta$ not only from $\partial G_t$, but also from $\partial G_t^{(\kappa)}$, where $|\kappa|\leq\varepsilon$, with some positive constant 
${\varepsilon<\tilde{\varepsilon}}$ which is taken one good for all $t\in\sigma'\mb{D}$. Let us denote from now on
$$
N(\partial G_t):=\bigcup_{|\kappa|<\frac{\varepsilon}{2}}\partial G_t^{(\kappa)},\quad t\in\sigma'\mb{D}.
$$
Observe that for fixed $t$ and $\zeta\in N(\partial G_t)$ there exists only one $\kappa$ with $\zeta\in\partial G_t^{(\kappa)}.$ Therefore, fixing a pair $(t,\zeta)$ actually determines the triple $(t,\zeta, \kappa).$ We shall frequently use this fact in the sequel without additional comments: namely, we shall only sometimes - when it will be not clear from the context - write $\kappa(t,\zeta)$ to indicate the fact that the particular $\kappa$ arises from the choice of $(t,\zeta)$. Otherwise, we shall omit this indexing.\\ 
Analyzing the proof of Proposition 1.2 in [\ref{Ugolini}], we see that there exists a $\mc{C}^2$-continuous family $(\Theta_{t,\zeta})_{t\in\sigma'\mb{D},\zeta\in N(\partial G_t)}$ of holomorphic automorphisms of $\mb{C}^n$, that can be represented as
$$
\Theta_{t,\zeta}(z)=\Psi_{t,\zeta}(z)+f_{t,\zeta}(z),
$$ 
where $f_{t,\zeta}$ is entire, and there exist positive constants $\theta$ and $A$, independent of $t\in\sigma'\mb{D}$ and $\zeta\in N(\partial G_t)$ with the property that for $\|z\|\leq \theta$ we have $\|f_{t,\zeta}(z)\|\leq A\|z\|^3$ (note {that} the set of parameters in [\ref{Ugolini}] is assumed to be a Stein space. On the other hand, the dependence of parameters considered there is holomorphic. We need only a $\mc{C}^2$-continuous dependence of parameters and in this case the assumption about {Steinness} of the set of parameters can be omitted. Also, the very same conclusion is possible to be obtained by using [\ref{Weickert}]).\\
\indent For $t\in\sigma'\mb{D}$ and $\zeta\in\partial G_t^{(\kappa)}$ put $D_{t,\zeta}^{(\kappa)}:=\Theta_{t,\zeta}(l_{t,\zeta}(G_t^{(\kappa)}))$ and observe that for these domains we have $T_0(\partial D_{t,\zeta}^{(\kappa)})$, the real tangent space to $\partial D_{t,\zeta}^{(\kappa)}$ at $0$, equals $\{x_1=0\}$, $\bs{n}_0=(1,0,\ldots,0),$ and $\partial D_{t,\zeta}^{(\kappa)}$ is 2-convex near $0$ in the sense of (\ref{eq2}), {after possible decreasing $\rho$ and $D$} there, so that they remain to be independent of $t\in\sigma'\mb{D}, |\kappa|{<}\varepsilon${,} after possible decreasing $\varepsilon$, and $\zeta\in\partial G_t^{(\kappa)}$.\\
\indent As in [\ref{PAMS18}], we may without loss of generality (after possible slight decreasing of $\sigma'$) modify the family of curves $(\gamma_{t,\zeta})_{t\in\sigma'{\mb{D}},\zeta\in N(\partial G_t)}$  so that the initial parts (of uniform length) of the curves $\Theta_{t,\zeta}(l_{t,\zeta}(\gamma_{t,\zeta}))$ are all equal to the segment $\bs{e}_{1}[0,s]$ with some positive $s$, arbitrarily small{, and so that they are perpendicular to $\Theta_{t,\zeta}(l_{t,\zeta}(\mb{S}^{{2n-1}}(R)))$ where they intersect}. 
The modification may be carried out so that the modified family, still denoted by $(\gamma_{t,\zeta})_{t\in\sigma'\mb{D},\zeta\in N(\partial G_t)}$, keeps its regularity.\\
\indent For any $t\in\sigma'\mb{D}$ and any $\zeta\in N(\partial G_t)$ (after possible decreasing $\varepsilon$) let $U_{t,\zeta}$ be a neighbourhood of $\overline{D^{(\kappa)}_{t,\zeta}}\cup(\bs{e}_1[0,s])$ and $V_{t,\zeta}\s U_{t,\zeta}$ be a neighbourhood of $\overline{D^{(\kappa)}_{t,\zeta}}$, both $\mc{C}^2$-continuously dependent on $(t,\zeta)$, and let
a $\mc{C}^2$-continuous family $(g_{t,\zeta}:U_{t,\zeta}\rightarrow\mb{C}^n)_{t\in\sigma'\mb{D},\zeta\in N(\partial G_t)}$ of smooth embeddings such that $g_{t,\zeta}=$ Id in $V_{t,\zeta}$, $g_{t,\zeta}$ stretches $\bs{e}_{1}[0,s]$ to cover $\Theta_{t,\zeta}(l_{t,\zeta}(\gamma_{t,\zeta}{([0,1])}))$, and {$g_{t,\zeta}(\bs{e}_{1}[0,s])$ is perpendicular to $(\Theta_{t,\zeta}\circ l_{t,\zeta})(\mb{S}^{{2n-1}}(R))$}. Note that $U_{t,\zeta}$ and $V_{t,\zeta}$ may be chosen to be independent {of} $|\kappa|<\varepsilon${, and thus of $\zeta$} (after eventually decreasing $\varepsilon$).\\ 
\indent We want to apply Theorem \ref{Mergelyan} for small $\varepsilon_0>0$ and the family $(g_{t,\zeta})_{(t,\zeta)\in P},$ where $P$ is some relatively compact subset of the open set $\{(t,\zeta):t\in\sigma'\mb{D},\zeta\in N(\partial G_t)\}$ containing in its interior all the couples $(t,\zeta)$ with $t\in\sigma\overline{\mb{D}},\zeta\in\partial G_t$. 
In our concrete situation we want to modify the construction of $(f_{t,\zeta})$ therein, taking into account the variable domains of the functions $g_{t,\zeta}$ and in order to get its $\mc{C}^2$-continuous dependence on all variables and thus allowing the interpolation to order 3 at $s\bs{e}_{1}$ with the continuity of new approximating and interpolating family of functions with respect to all variables. Of course the domains of such functions will also depend on $t$ in a suitable way. Moreover, we need to make sure that the functions $f_{t,\zeta}$ are injections in some suitable chosen neighbourhoods of $\overline{D^{(\kappa)}_{t,\zeta}}\cup(\bs{e}_{1}[0,s])$ for $(t,\zeta)\in P$. Below we indicate the modifications of the construction of the family $(f_{t,\zeta})$ required when proving a variant of Theorem \ref{Mergelyan} in this particular case.\bigskip\bigskip\\
\noindent\textbf{Variable domains, the $\mc{C}^2$ dependence on all variables and the interpolation at $s\bs{e}_{1}.$} Fix $P$, a relatively compact subset of the open set $\{(t,\zeta):t\in\sigma'\mb{D},\zeta\in N(\partial G_t)\}$ containing in its interior all the couples $(t,\zeta)$ with $t\in\sigma\overline{\mb{D}},\zeta\in\partial G_t$. There exists a small positive constant $\beta<\frac{s}{3}$ with the property that for all couples $(t,\zeta)\in P$ we may find the strictly pseudoconvex domains $H_{t,\zeta}$ with $\mc{C}^2$-smooth boundaries, depending in a $\mc{C}^2$-continuous way on $(t,\zeta)$, and satisfying
\begin{enumerate}
\item $\overline{D^{(\kappa)}_{t,\zeta}}\s\s H_{t,\zeta}\s\s V_{t,\zeta}$
\item dist$(\overline{D^{(\kappa)}_{t,\zeta}},\partial H_{t,\zeta})=\beta$
\item dist$(\overline{H_{t,\zeta}},\partial V_{t,\zeta})\geq\beta$
\item tube with radius $\frac{\beta}{4}$ around $\bs{e}_1[0,s]$ is compactly contained in $U_{t,\zeta}$
\item {$\overline{H_{t,\zeta}}\cap(\bs{e}_1(\beta,s])=\varnothing$ and} $S_{t,\zeta}:=\overline{H_{t,\zeta}}\cup(\bs{e}_1[\beta,s])$ satisfies the assumptions of Theorem \ref{Mergelyan}.
\end{enumerate}
In order to apply the standard interpolation corrections with the continuity of corrected family of functions with respect to all variables, we need to adjust the construction of $(f_{t,\zeta})$ in Theorem \ref{Mergelyan} (for $S=S_{t,\zeta}$) so that we take care of the variable domains $U_{t,\zeta}$ and that it will depend in a $\mc{C}^2$-continuous way on all variables (as the input data $(g_{t,\zeta})$ do). To get this aim, observe that in our particular situation, in Step 1 of proof of Theorem \ref{Mergelyan} we may take $\varphi_{t,\zeta}=$ Id, $(t,\zeta)\in P$ (and a suitable family $(\chi_{t,\zeta})_{(t,\zeta)\in P}$ of cutoff functions, smoothly dependent on all variables, with supports contained in $V_{t,\zeta}$ and equal one on neighbourhoods of $\overline{H_{t,\zeta}}$ with - by the compactness argument - distances to the boundaries uniformly bounded from below). Putting now $\psi_{t,\zeta}:=(1-\chi_{t,\zeta})(g_{t,\zeta}-$Id$)_{(t,\zeta)\in P}$, we observe that there exist an $\tilde{s}>s$ and a compact set $F\s\bs{e}_1(\beta,\tilde{s}]$ such that for all $(t,\zeta)\in P$ we have spt$\psi_{t,\zeta}|_{\bs{e}_1[\beta,\tilde{s}]}\s F$ (after possible multiplying by suitable cutoff function) and it suffices to approximate the family $(\psi_{t,\zeta})_{(t,\zeta)\in P}.$ Therefore, we only have to modify the construction from Step 2 in order to get the better regularity we are after. Here, observe that the proof remains unchanged until we have to choose the open sets $M_1\s B$ and $S_{t,\zeta}\setminus M_1\s A$ with $A\cap B\s U$ for all $(t,\zeta)\in P$. Observe that, by the compactness argument, these sets may be chosen to be independent of $(t,\zeta)\in P$.\\
For a fixed $(t,\zeta)\in P$ one may choose a strictly pseudoconvex and smoothly bounded domains $N_{t,\zeta}$ and $\hat{N}_{t,\zeta}$ such that
$$S_{t,\zeta}\s\s N_{t,\zeta}\s\s\hat{N}_{t,\zeta}\s A\cup B,$$
and by Theorem V.2.5 from [\ref{Ran}], there exist neighbourhoods
$$
\overline{N_{t,\zeta}}\s V^{t,\zeta}_0\s\s V^{t,\zeta}\s\s\hat{N}_{t,\zeta}
$$
and a solution operator for $\overline{\partial}$-problem 
$$
\boldsymbol{T}_1^{V^{t,\zeta},V^{t,\zeta}_0}:\mc{C}_{0,1}(\overline{V^{t,\zeta}})\rightarrow\mc{C}_{0,0}(V^{t,\zeta}_0)
$$
satisfying (i)-(iii) therein. Observe that for $(s,\xi)$ sufficiently close to $(t,\zeta)$
we have 
$$
S_{s,\xi}\s\s N_{t,\zeta},
$$
with the distance to the boundary uniformly bounded from below.\\
\indent Now the family $(h_{s,\xi})\s\mc{O}(\mb{C}^n)$ from Step 2 of the proof of Theorem \ref{Mergelyan}, appearing there as $(h_t)$, depends in our situation in a $\mc{C}^2$-continuous way on all variables and it suffices to get the same regularity with respect to the parameters of the solutions of the 1st. Cousin Problem with bounds for functions $h_{s,\xi}$ and the coverings $N_{{t,\zeta},A}:=N_{{t,\zeta}}\cap A,N_{{t,\zeta},B}:=N_{{t,\zeta}}\cap B$ of $N_{{t,\zeta}}$. This we reach by observing that utilizing in the proof of Theorem VII.6.3 from [\ref{Ran}] the operator $\boldsymbol{T}_1^{V^{t,\zeta},V_{0}^{t,\zeta}}$ instead of $\hat{\boldsymbol{S}}_1$ (cf. [\ref{Ran}, Theorem VII.5.6]) gives, for $(s,\xi)$ close to $(t,\zeta)$, the functions $h^{{t,\zeta},A}_{s,\xi}\in\mc{O}(N_{{t,\zeta},A}), h^{{t,\zeta},B}_{s,\xi}\in\mc{O}(N_{{t,\zeta},B}),$ depending in a $\mc{C}^2$-continuous way on all variables, and such that $h_{s,\xi}=h_{s,\xi}^{{t,\zeta},A}-h_{s,\xi}^{{t,\zeta},B}$ on $N_{t,\zeta}\cap A\cap B$ and with
\begin{equation}
\|h^{{t,\zeta},A}_{s,\xi}\|_{N_{{t,\zeta},A}}<E\varepsilon,\|h^{{t,\zeta},B}_{s,\xi}\|_{N_{{t,\zeta},B}}<E\varepsilon,\label{eq3}
\end{equation} 
where the positive constant $E$ is independent of $s$ and $\xi$. Note that we have used (iii) from Theorem V.2.5 in [\ref{Ran}] (to get suitable regularity with respect to the parameters) and estimates from the beginning of Section V.3.2, also in [\ref{Ran}] (to get estimates (\ref{eq3})). Observe it is crucial that the functions $h_{s,\xi}$ are entire.\\
Then, by the compactness argument, we find $W_1,\ldots, W_q$, and open cover of a neighbourhood of $P$ such that for each $j=1,\ldots, q$ there exist a strictly pseudoconvex and smoothly bounded domain $N_j$ with the property that for all $(s,\xi)\in W_j$ {we have} $S_{s,\xi}\s {N}_j$ with the distance to the boundary uniformly bounded from below, {and} there exist functions $h^{j,A}_{s,\xi}\in\mc{O}(N_{j}\cap A), h^{j,B}_{s,\xi}\in\mc{O}(N_{j}\cap B),$ depending in a $\mc{C}^2$-continuous way on all variables, and such that $h_{s,\xi}=h_{s,\xi}^{j,A}-h_{s,\xi}^{j,B}$ on $N_{j}\cap A\cap B$ and with
$$
\|h^{{j},A}_{s,\xi}\|_{N_{j}\cap A}<E_j\varepsilon,\|h^{j,B}_{s,\xi}\|_{N_{j}\cap B}<E_j\varepsilon,
$$
with positive constant $E_j$ good for all $(s,\xi)\in W_j$.\\
Let $(p_j)_{j=1}^q$ be a partition of unity subordinated to the covering $(W_j)_{j=1}^q$ of a neighbourhood of $P$. Define for $(t,\zeta)\in P$
$$
h^A_{t,\zeta}:=\sum_{j=1}^qp_j(t,\zeta)h^{j,A}_{t,\zeta},\quad h^B_{t,\zeta}:=\sum_{j=1}^qp_j(t,\zeta)h^{j,B}_{t,\zeta}.
$$
Then $h^A_{t,\zeta}\in\mc{O}(Z_{t,\zeta}\cap A),h^B_{t,\zeta}\in\mc{O}(Z_{t,\zeta}\cap B)$, where $Z_{t,\zeta}$ is a neighbourhood of $S_{t,\zeta}$, $\mc{C}^2$-continuously dependent on $(t,\zeta)$ and with the distance to the boundary uniformly (in $(t,\zeta)$) bounded from below. Moreover,
$$
\|h^{A}_{t,\zeta}\|_{U_{t,\zeta}\cap A}<E'\varepsilon,\|h^{B}_{t,\zeta}\|_{U_{t,\zeta}\cap B}<E'\varepsilon,
$$
where positive constant $E'$ does not depend on $(t,\zeta).$ Also,
$$
h_{t,\zeta}=h^A_{t,\zeta}-h^B_{t,\zeta}
$$
on $Z_{t,\zeta}\cap A\cap B$ and we finish the proof along the lines of the proof of Theorem \ref{Mergelyan}.\\
Finally we may add a family of small corrections to get interpolation at $s\bs{e}_{1}$, which now depends continuously on all variables.\bigskip\bigskip\\

\noindent{\textbf{Injectivity in neighbourhoods of $\overline{D_{t,\zeta}^{(\kappa)}}\cup(\bs{e}_{1}[0,s])$ with uniform distance to the boundary.}} This is a consequence of suitably modified techniques presented in Lemma 2.3 from [\ref{PAMS18}]. Namely, let us fix $(t,\zeta)\in P$ and consider the restriction of the function $g_{t,\zeta}$ to the domain $U_{t,\zeta}^0$ with
$$
\overline{D_{t,\zeta}^{(\kappa)}}\s\s U_{t,\zeta}^0\s\s U_{t,\zeta},
$$
created by attaching to $H_{t,\zeta}$ a tubular neighbourhood of radius $\frac{\beta}{4}$ around $\bs{e}_{1}[0,s]$. Then for $(s,\xi)$ sufficiently close to $(t,\zeta)$
we have
$$
\overline{D_{s,\xi}^{(\kappa(s,\xi))}}\s\s U_{t,\zeta}^0
$$
as well as
$$
\overline{D_{t,\zeta}^{(\kappa)}}\s\s U_{s,\xi}^0.
$$
{We claim that for every $(s,\xi)$ sufficiently close to $(t,\zeta)$ there exists $W_{s,\xi}$, a neighbourhood of $\overline{D^{(\kappa(s,\xi))}_{s,\xi}}\cup(\bs{e}_{1}[0,s])$, such that the distance to the boundary is uniformly bounded from below and with the property that the functions $f_{s,\xi}$ given by Theorem \ref{Mergelyan} for ${g}_{s,\xi}$ with $\varepsilon<\varepsilon_0$ for some sufficiently small $\varepsilon_0>0$ and with the modifications described in the preceding paragraph are all injections on the domains $W_{s,\xi}$.} 
Indeed, let $\varepsilon_0>0$ and for $(s,\xi)$ close to $(t,\zeta)$ take the decreasing sequences of domains $(U_{(s,\xi),k})_{k\in\mb{N}}$ created in a similar way as $U^0_{s,\xi}$, only with the radius of the used tube less than $\frac{1}{k}$ and with $H_{s,\xi}$ replaced by $\overline{D^{(\kappa(s,\xi))}_{s,\xi}}^{\delta+\frac{1}{k}}$ with sufficiently small positive $\delta$ (so that it is compactly contained in $H_{s,\xi}$ for large $k$).\\ 
\indent {Suppose that for any $k\in\mb{N}$ and any $r\in\mb{N}$ large enough to ensure $U_{(s,\xi),k}\s {Z}_{s,\xi}\cap {Z}_{t,\zeta}$ {for $(s,\xi)$ with the distance to $(t,\zeta)$ smaller than $\frac{1}{r}$} (recall ${Z}_{s,\xi}$ are {in place of $\Omega$} in Theorem \ref{Mergelyan} - see also preceding paragraph - and they do not depend on $\varepsilon$), and any $j_0\in\mb{N}$ there are $\mb{N}\ni j\geq j_0$ and} $(t_{k,j,r},\zeta_{k,j,r})\in P$ with the distance to $(t,\zeta)$ smaller than $\frac{1}{r}$, and $a_{k,j,r}\neq b_{k,j,r}\in U_{(t_{k,j,r},\zeta_{k,j,r}),k}$ such that $f^j_{t_{k,j,r},\zeta_{k,j,r}}(a_{k,j,r})=f^j_{t_{k,j,r},\zeta_{k,j,r}}(b_{k,j,r})$, where the functions $f_{s,\xi}^j$ are given by Theorem \ref{Mergelyan} with $\varepsilon=\frac{1}{j}$ (after modifications described in the preceding paragraph concerning the regularity with respect to parameters and the interpolation condition). 
We may assume that $(t_{k,j,r},\zeta_{k,j,r})\to (t,\zeta)$ and $a_{k,j,r}\to a,b_{k,j,r}\to b$, where $a,b\in\overline{D^{(\kappa{(t,\zeta)})}_{t,\zeta}}^{\delta}\cup(\bs{e}_{1}[0,s])$ as $k,j_{{0}},r\to\infty.$\\ 
\indent Using the injectivity of ${g}_{t,\zeta}$ and the fact that, by the construction, the family $(f^j_{s,\xi})$ is uniformly bounded (and hence equicontinuous) near $a$ and near $b$ for $(s,\xi)$ sufficiently close to $(t,\zeta)$, we conclude that in fact it has to be $a=b.$ From {the mathods used in the proof of} Lemma 2.3 in [\ref{PAMS18}] we get $a$ may only be an element of the segment $\bs{e}_{1}(\delta,s].$ Now, using the properties of the domains $U_{(s,\xi),k}$, the equicontinouity of the family $(f^j_{t,\zeta})_{(t,\zeta)\in P, j\in\mb{N}}$ near $a$, and performing the computations similar to those from the proof of Lemma 2.3 in [\ref{PAMS18}], we get $f^j_{t_{k,j,r},\zeta_{k,j,r}}(a_{k,j,r})\neq f^j_{t_{k,j,r},\zeta_{k,j,r}}(b_{k,j,r})$ for large $k,j,r$, a contradition.\\ 
\indent Therefore, {for $(s,\xi)$ close to $(t,\zeta)$ there exist neighbourhoods $W_{s,\xi}$ of $\overline{D^{(\kappa(s,\xi))}_{s,\xi}}\cup(\bs{e}_{1}[0,s])$, such that the distance to the boundary is uniformly bounded from below and with the property that the functions $f_{s,\xi}$ given by Theorem \ref{Mergelyan} for ${g}_{s,\xi}$ with $\varepsilon<\varepsilon_0$ with sufficiently small $\varepsilon_0>0$, and with the modifications described in the preceding paragraph are all injections on the domains $W_{s,\xi}$.} Using the compactness argument, we see that for arbitrarily small $\varepsilon$ and for all $(t,\zeta)\in P$ there exist neighbourhoods $W_{t,\zeta}$ of $\overline{D^{(\kappa)}_{t,\zeta}}\cup(\bs{e}_{1}[0,s])$, such that the distance to the boundary is uniformly bounded from below and with the property that the functions $f_{t,\zeta}$ given by Theorem \ref{Mergelyan} for ${g}_{t,\zeta}$ with $\varepsilon$ are injections on the domains $W_{t,\zeta}.$ Moreover, for $\varepsilon$ sufficiently small, the domains $W_{t,\zeta}$ do not depend on $\varepsilon$.\bigskip\bigskip\\
\indent To summarize: we proved the existence of a continuous family $(f_{t,\zeta})_{(t,\zeta)\in P}$ of holomorphic embeddings $\overline{D^{(\kappa)}_{t,\zeta}}\cup(\bs{e}_{1}[0,s])\rightarrow\mb{C}^n$ uniformly (in all variables) close to Id on neighbourhoods of  $\overline{D^{(\kappa)}_{t,\zeta}}$, with the images $f_{t,\zeta}(\bs{e}_{1}[0,s])$ uniformly (in all variables) close to $\Theta_{t,\zeta}(l_{t,\zeta}(\gamma_{t,\zeta}{[0,1]}))$ and with the property that  {$f_{t,\zeta}(\bs{e}_{1}[0,s])$ is perpendicular to $(\Theta_{t,\zeta}\circ l_{t,\zeta})(\mb{S}^{{2n-1}}(R))$}. Note that in the case {of} a single domain treated in [\ref{PAMS18}], the similar result is obtained by different method, based on [\ref{KutWo}].\bigskip\bigskip\\
Let 
$$
\hat{h}_{t,\zeta}:=(\Theta_{t,\zeta}\circ l_{t,\zeta})^{-1}\circ f_{t,\zeta}\circ(\Theta_{t,\zeta}\circ l_{t,\zeta}),
$$
on the set where the composition is defined. In the last part of the proof we shall construct, for sufficiently small positive $s$, the continuous family $(F_{t,\zeta})_{(t,\zeta)\in P}$ of holomorphic embeddings $\overline{G_t^{{(\kappa)}}}\rightarrow\mb{C}^n$ such that for all $(t,\zeta)\in P$ we have
\begin{enumerate}[(1')]
\item $F_{t,\zeta}(\zeta)=(\Theta_{t,\zeta}\circ l_{t,\zeta})^{-1}(s\bs{e}_1)$\label{1prim}
\item $F_{t,\zeta}(\overline{G_t^{{(\kappa)}}}\cap\mb{B}(\zeta,s'))\s (\Theta_{t,\zeta}\circ l_{t,\zeta})^{-1}((\bs{e}_1[0,s-2b])^c\cup\mb{B}(\bs{e}_1(s-b),b)\cup\{s\bs{e}_1\})$
\item $\|F_{t,\zeta}-\text{Id}\|_{\overline{G_t^{{(\kappa)}}}\setminus\mb{B}(\zeta,s')}$ is arbitrarily small\label{3prim}
\end{enumerate}
with some small positive $s',b,c$. Taking, for all $t\in\sigma\overline{\mb{D}}$ and $\zeta\in\partial G_t$, the composition
$$
h_{t,\zeta}:=\hat{h}_{t,\zeta}\circ F_{t,\zeta}
$$
will end the proof.\bigskip\bigskip\\ 
\noindent{\textbf{Construction of the family $(F_{t,\zeta})_{(t,\zeta)\in P}$.}} 
Observe that there exist $r,S>0$ such that for all $(t,\zeta)\in P$ we have
$$
\Theta_{t,\zeta}(l_{t,\zeta}((\overline{G_t^{{(\kappa)}}}\cap\mb{B}(\zeta,r))\setminus\{\zeta\}))\s\mb{B}(-S\bs{e}_1,S)
$$
and
$$
\Theta_{t,\zeta}(l_{t,\zeta}(\zeta))=0\in\partial\mb{B}(-S\bs{e}_1,S).
$$
By Theorem 3.1 from [\ref{PAMS18}], for all sufficiently small positive $s,\delta,c$, and $b$ with $b<s,c<s$ there exists a holomorphic injection $\Phi:\overline{\mb{B}}(-S\bs{e}_1,S)\rightarrow\mb{C}^n$ such that 
\begin{enumerate}[(1+)]
\item $\|\Phi-\text{Id}\|_{\overline{\mb{B}}(-S\bs{e}_1,S)\setminus\mb{B}(0,\delta)}$ is arbitrarily small
\item $\Phi(0)=s\bs{e}_1$
\item $\Phi(\overline{\mb{B}}(-S\bs{e}_1,S)\cap\mb{B}(0,\delta))\s(\bs{e}_1[0,s-2b])^c\cup\mb{B}((s-b)\bs{e}_1,b)\cup\{s\bs{e}_1\}$.
\end{enumerate}
Define, for $(t,\zeta)\in P$,
$$
\tilde{F}_{t,\zeta}(z):=((\Theta_{t,\zeta}\circ l_{t,\zeta})^{-1}\circ\Phi\circ(\Theta_{t,\zeta}\circ l_{t,\zeta}))(z)
$$
for $z$ from $\mb{B}(\zeta,r)$ intersected with some neighbourhood of $\overline{G_t^{{(\kappa)}}}$ of uniform size (in $t,\zeta$). Then, after eventually shrinking the domains of definition, the family $(\tilde{F}_{t,\zeta})_{(t,\zeta)\in P}$ depends $\mc{C}^2$-continuously on all variables and for every $(t,\zeta)\in P$ we have
\begin{enumerate}[(1*)]
\item $\tilde{F}_{t,\zeta}$ is a holomorphic injection
\item $\tilde{F}_{t,\zeta}(\zeta)=(\Theta_{t,\zeta}\circ l_{t,\zeta})^{-1}(s\bs{e}_1)$
\item $\tilde{F}_{t,\zeta}(\overline{G_t^{{(\kappa)}}}\cap\mb{B}(\zeta,s'))\s(\Theta_{t,\zeta}\circ l_{t,\zeta})^{-1}((\bs{e}_1[0,s-2b])^c\cup\mb{B}((s-b)\bs{e}_1,b)\cup\{s\bs{e}_1\})$ with some sufficiently small $s'\in(0,s).$\label{3star}
\end{enumerate}
Let us consider, for sufficiently small positive $k$, the family of Cartan pairs $((A_{t,\zeta},B_{t,\zeta}))_{(t,\zeta)\in P}$, where
$$
A_{t,\zeta}:=\overline{G_t^{(\kappa)}}\cap\overline{\mb{B}}(\zeta,k),\quad B_{t,\zeta}:=\overline{G_t^{(\kappa)}}\setminus{{\mb{B}}}(\zeta,\frac{k}{2}).
$$
Define
$$
C_{t,\zeta}:=A_{t,\zeta}\cap B_{t,\zeta}
$$
and observe that if $k$ is small enough, then we have
$$
\Theta_{t,\zeta}(l_{t,\zeta}(C_{t,\zeta}))\s\s\mb{B}(-S\bs{e}_1,S)
$$
with the distance to the boundary uniformly (in $t,\zeta$) bounded from below. If in the choice of $\Phi$ the constant $\delta$ (and consequently also $s'$ from (\ref{3star}*)) was sufficiently small, then for all $(t,\zeta)\in P$ the mappings $\tilde{F}_{t,\zeta}$ are uniformly arbitrarily close to Id in neighbourhoods of $C_{t,\zeta}$ of uniform (in $t,\zeta$) size. By Theorem \ref{Splitting} and Remark \ref{Interpolation} we get the existence of the continuous family $(\alpha_{t,\zeta})_{(t,\zeta)\in P}$ of biholomorphic mappings in neighbourhoods of $A_{t,\zeta}$, of uniform (in $t,\zeta$) size, interpolating {the} identity to {arbitrarily high order} at $\zeta$, and the continuous family $(\beta_{t,\zeta})_{(t,\zeta)\in P}$ of biholomorphic mappings in neighbourhoods of $B_{t,\zeta}$ of uniform (in $t,\zeta$) size, such that the family
$$
F_{t,\zeta}:=
\begin{cases}
\tilde{F}_{t,\zeta}\circ\alpha_{t,\zeta},& \text{in\ a\ neighbourhood\ of\ }A_{t,\zeta}\\
\beta_{t,\zeta},& \text{in\ a\ neighbourhood\ of\ }B_{t,\zeta}
\end{cases},\quad (t,\zeta)\in P,
$$
fulfilling (\ref{1prim}')-(\ref{3prim}'), is the last piece of our puzzle.
\end{proof}
\section{Strongly linearly convex case}\label{SLCC}
\begin{ex}\label{Example}
Let $k\geq 3$ and let $\rho$ and $G_t$ be as in Problem \ref{Problem} and assume additionally that $G_t$ is strongly linearly convex for each $t$. Let $\sigma<\sigma'\in(0,1).$\\ 
By Proposition 2.2.3 in [\ref{Extension}], for any $t$ there exist $U_t$, a neighbourhood of $\partial G_t$ and a $\mc{C}^{k-1}$-continuous mapping $\pi_t:U_t\rightarrow\partial G_t$ such that for $x\in U_t$, $\pi_t(x)$ is a unique point from $\partial G_t$ that realizes dist$(x,\partial G_t)$. By analyzing the proof of that proposition, we see that $U_t$ and $\pi_t$ may be chosen to be $\mc{C}^{k-1}$-continuously dependent on $t$. Moreover, by our assumptions, the choice may be carried out in such a way that for $s,t$ close enough we have $\partial G_t\s U_s$ and $\partial G_s\s U_t.$\\
For each $t$ let us choose open sets
$$
\partial G_t\s U''_t\s\s U'_t\s\s U_t,
$$
varying in a $\mc{C}^{k-1}$-continuous manner with $t$, and cutoff functions $\chi_t$ such that $\chi_t=0$ on $\mb{C}^n\setminus U'_t$ and $\chi_t=1$ on $U''_t$, also varying in a $\mc{C}^{k-1}$-continuous way.\\
If now $s,t$ are close enough, the mapping
$$
\varphi_{st}:\mb{C}^n\ni z \rightarrow z+(\pi_s(z)-\pi_t(z))\chi_s(z)\chi_t(z)\in\mb{C}^n
$$
is a $\mc{C}^{k-1}$-diffeomorphism (cf. [\ref{Stout}], p. 400). Obviously $\varphi_{st}(\partial{G_t})\s\partial G_s$, and even an equality must hold there, because boundary of strongly linearly convex domain of class at least $\mc{C}^2$ is diffeomorphic with $\mb{S}^{{2n-1}}$ (see [\ref{Torres}]), and $\mb{S}^{{2n-1}}$ is not diffeomorphic with any of its proper subsets. Therefore, for $s,t$ close enough $\varphi_{st}$ constitutes a $\mc{C}^{k-1}$-diffeomorphism between $\partial G_t$ and $\partial G_s$ (and indeed between $\overline{G}_t$ and $\overline{G}_s$).\\
\indent We would like to construct a $\mc{C}^{k-1}$-continuous family of $\mc{C}^{k-1}$-diffeomorphisms $(\psi_t:\mb{C}^n\rightarrow\mb{C}^n)_{t\in\sigma'\overline{\mb{D}}}$, mapping $\overline{G_t}$ diffeomorphically to $\overline{G_0}$. Define
$$
\mc{R}:=\{r\in [0,\sigma']:\text{\ there\ exists\ such\ a\ family\ for\ }t\in r\overline{\mb{D}}\}.
$$
Obviously $\mc{R}\neq\varnothing$. Furthermore, $\mc{R}$ is open in $[0,\sigma']$: let $r_0\in\mc{R}$ and let $(\tilde{\psi}_t)_{t\in r_0\overline{\mb{D}}}$ be a suitable family. In virtue of the observation we just made, that for $s,t$ close enough $\varphi_{st}$ is a diffeomorphism between $\overline{G}_t$ and $\overline{G}_0$, we may, without loss of generality, assume that $r_0\neq 0.$ Similarly, we may assume $r_0\neq\sigma'$. It is apparent that we only need to show that there exists some $r\in(r_0,\sigma')$ and the family $(\psi_t)_{t\in r\overline{\mb{D}}}$ of required diffeomorphisms. Let us consider the covering of the circle $|{\zeta}|=r_0$ by the finite family $U_1,\ldots, U_m$ of closed balls, with the multiplicity of the covering equal 2, such that for any $j\in\{1,\ldots, m\}$, if we denote by $a_j$ the center of the ball $U_j$, then $|a_j|<r_0$ and $a_j\notin U_k$ for every $k\in\{1,\ldots,j-1,j+1,\ldots,m\}$, and moreover, for every $j\in\{1,\ldots, m\}$ and for every $s,t\in U_j$, $\varphi_{st}$ is a diffeomorphism between $\overline{G}_t$ and $\overline{G}_s.$ Put
$$
\psi_t:=\begin{cases}
\tilde{\psi}_t,&t\in r_0\overline{\mb{D}}\\
\tilde{\psi}_{d(t)}\circ\varphi_{d(t)t},&t\in\bigcup_{j=1}^mU_j\setminus r_0\overline{\mb{D}},
\end{cases}
$$
where $d(t)$ denotes a point from $|{\zeta}|=r_0$ closest to $t$. Since we take $t$ outside $r_0\overline{\mb{D}}$, we get $d(t)$ is unique, and the function $d$ is smooth.
Then $(\psi_t)_{t\in\left(\bigcup_{j=1}^mU_j\right)\cup r_0\overline{\mb{D}}}$ is a $\mc{C}^{k-1}$ family of required diffeomorphism, which ends the proof of openness of $\mc{R}$ (after restricting the set of parameters to $r\overline{\mb{D}}$ with suitable $r\in(r_0,\sigma')$).\\
\indent $\mc{R}$ is also closed: let $r_{\nu}\rightarrow r_0.$ Then we cover the circle $|{\zeta}|=r_0$ with balls $U_1,\ldots, U_m$ as in the proof of the openness, and we observe that there exists a $\nu_0$ with $\{|{\zeta}|=r_{\nu_0}\}\s\bigcup_{j=1}^mU_j$ and $|a_j|<r_{\nu_0},j=\{1,\ldots,m\}$. Now we use a similar argument as in the proof of openness to produce a suitable family of difeomorphisms for $t\in r_0{\overline{\mb{D}}}\s\left(\bigcup_{j=1}^mU_j\right)\cup r_{\nu_0}\overline{\mb{D}}$.\\
Let $\Phi:\overline{G_0}\rightarrow\overline{I}$ be a $\mc{C}^1$-diffeomorphism, where $I$ is a complete circular domain with boundary of class $\mc{C}^2$ (see [\ref{Pang}]) and let $\Psi:\overline{I}\rightarrow\overline{\mb{B}}$ be a $\mc{C}^1$-diffeomorphism, whose existence was pointed out to the author by Andrea Spiro (private communication; the proof requires some modifications of standard proofs that open star shaped domain is diffeomorphic to the unit ball, cf. [\ref{Elipses}]). The composition of these latter diffeomorphisms gives a $\mc{C}^1$-diffeomorphism between $\overline{G_0}$ and $\overline{\mb{B}}$, and by the argument as in the proof of Theorem 5.2 in [\ref{PAMS18}], there exists $\Theta$, a $\mc{C}^1$-diffeomorphism of $\mb{C}^n$ such that $\Theta(\overline{G}_0)=\overline{\mb{B}}$.  Define $\Gamma_t:=\Theta\circ\psi_t$, which is a $\mc{C}^{k-1}$-continuous family of $\mc{C}^1$-diffeomorphisms of $\mb{C}^n$ with $\Gamma_t(\overline{G}_t)=\overline{\mb{B}}.$ Also, if $R>0$ is large enough, all the mappings $\psi_t$ are equal Id on the preimage by $\Theta$ of the sphere $\|z\|=R.$ Now the family of embedded curves $(\gamma_{t,\zeta})$ as in Theorem \ref{main} may be produced as in the proof of Theorem 5.1 from [\ref{PAMS18}] and with the aid of results from Chapter {3} in [\ref{Hirsch}] and the discussion from [\ref{AMR}, Section 4.1] concerning the parameter dependence of the evolution operators of parameter dependent vector fields.\end{ex}
\begin{rem}
Observe that the argument similar to the one just presented would remain true if only we knew at the very beginning that the closures of the domains we work with are all $\mc{C}^1$-diffeomorphic to the closed unit ball (compare Theorem 5.2 in [\ref{PAMS18}]).
\end{rem}

\section{Concluding remarks}\label{concluding}
As we have observed in the Introduction, it seems that with the existing methods at hand we are not able to omit the assumption about the existence of the family $(\gamma_{t,\zeta})$ in Theorem \ref{main}. On the other hand, even with this additional assumption, the full solution for the question (B) from the Problem \ref{Problem}, i.e. passing from "compact" case to the case where the set of parameters equals $\mb{D}$, and increasing the regularity of the family $(h_{t,\zeta})$ with respect to all variables (if the domains vary in suitably more regular manner) still requires the developing some subtle tools, for example a qualitatively new version of parameter Forstneri\v{c} splitting lemma, where we would have better regularity of the families $(\alpha_t)$ and $(\beta_t)$, and we would be able to change the size of $\mu$- and $\tau$- hulls appearing there pretty arbitrarily with the parameter (see Theorem \ref{Splitting} for the notation). We hope to undertake this problem in forthcoming paper(s).
\bibliographystyle{amsplain}

\end{document}